\algrenewcommand\textproc{\textsc}
\title{Lift, Partition, and Project: Parametric Complexity Certification of Active-Set QP Methods in the Presence of  Numerical Errors}
\author{Daniel Arnstr\"om and Daniel Axehill
  \thanks{This work was supported by the Swedish Research Council (VR) under contract number 2017-04710.}
\thanks{D. Arnstr\"om and D. Axehill are with the Division of Automatic Control, Link\"oping University,
  Sweden 
{\tt\small daniel.\{arnstrom,axehill\}@liu.se}}%
}
\begin{document}
\renewcommand{\baselinestretch}{0.91}

\definecolor{set19c1}{HTML}{E41A1C}
\definecolor{set19c2}{HTML}{377EB8}
\definecolor{set19c3}{HTML}{4DAF4A}
\definecolor{set19c4}{HTML}{984EA3}
\definecolor{set19c5}{HTML}{FF7F00}
\definecolor{set19c6}{HTML}{FFFF33}
\definecolor{set19c7}{HTML}{A65628}
\definecolor{set19c8}{HTML}{F781BF}
\definecolor{set19c9}{HTML}{999999}

\maketitle
\thispagestyle{empty}
\pagestyle{empty}
\newtheorem{proposition}{Proposition}
\newtheorem{lemma}{Lemma}
\newtheorem{corollary}{Corollary}
\newtheorem{remark}{Remark}
\newtheorem{theorem}{Theorem}
\newtheorem{definition}{Definition}
\newtheorem{assumption}{Assumption}
\newtheorem{example}{Example}

\newcommand{\certcite}{\cite{zeilinger2011certlp,cimini2017certqp,cimini2019complexity,arnstrom2022unifying}} 

\begin{abstract}
  When Model Predictive Control (MPC) is used in real-time to control linear systems, quadratic programs (QPs) need to be solved within a limited time frame. 
  Recently, several parametric methods have been proposed that certify the number of computations active-set QP solvers require to solve these QPs. These certification methods, hence, ascertain that the optimization problem can be solved within the limited time frame. 
  A shortcoming in these methods is, however, that they do not account for numerical errors that might occur internally in the solvers, which ultimately might lead to optimistic complexity bounds if, for example, the solvers are implemented in single precision. In this paper we propose a general framework that can be incorporated in any of these certification methods to account for such numerical errors.
\end{abstract}

\section{Introduction}
In Model Predictive Control (MPC) an optimization problem must be solved at each time-step, which, when used for control of safety-critical systems operating in real-time with limited hardware, requires the employed optimization solver to be efficient and robust to numerical errors \cite{johansen2017dmpc}.  

In the context of linear MPC \cite{borrelli2017lmpc}, the optimization problems in question are quadratic programs (QPs) that depend on the system state and setpoints. A popular class of methods for solving such QPs is active-set methods \cite{ferreau2014qpoases,goldfarb,nocedal,daqp}, which, in the context of MPC, have the favourable properties of being numerically stable (compared with first-order methods) and of being straight-forward to warm start. A notorious drawback, especially unfavourable in real-time applications, is, however, that their worst-case computational complexity is exponential in the number of decision variables \cite{klee1972good}; but the actual computational complexity is often far from the worst case in practice \cite{spielman2004smoothed}. To close this gap between theory and practice, methods that determine \textit{a priori} worst-case bounds on the complexity for different active-set methods have been proposed in \certcite. These methods determine exactly which sequence of linear equation systems that must be solved (and hence the exact number of FLOPs) to compute an optimal solution, for any QP that can arise in a given linear MPC application. In particular, these QPs belong to a family of QPs that are parametrized by system states and setpoints \cite{bemporad2002explicit}. More concretely, then, the certification methods iteratively partition the parameter space into finer regions, where each such region contains states and setpoints that generate the same sequence of linear equation systems. 

A shortcoming in these complexity certification methods is, however, that none of them take into account numerical errors inside the optimization solvers to be certified. Hence, the resulting worst-case complexity certificates are only valid in perfect arithmetic, which might suffice when double precision is used. In practice, however, the solvers are often implemented on limited hardware that require single or fixed-point precision to be used to fulfill real-time constraints. The resulting reduction in precision not only reduces the quality of the solution, it can also cause the solver to cycle. By not considering such numerical errors, then, the certification methods might provide optimistic bounds on the computational complexity, and in the worst-case the methods might signal finite computational complexity when in actuality the complexity is infinite due to cycling from numerical errors.

In the context of MPC, numerical errors, in particular round-off errors, have mainly been analyzed for fixed-point implementations of first-order methods \cite{mcinerney2019roundoff,patrinos2015fixedpoint, jerez2014embedded}. In the mathematical programming literature, methods for mitigating cycling in active-set QP methods have been proposed in \cite{fletcher1993resolving,gill1989practical}, where the former ensures finite termination even in the presence of round-off errors. A drawback of such anti-cycling schemes is, however, that they lead to additional overhead in each iteration. Also, as is noted in both \cite{fletcher1993resolving} and \cite{gill1989practical}, a suitable choice of tolerances often suffices, with the caveat that such a suitable choice is problem dependent and, hence, impossible to make \textit{a priori} if a large problem class should be handled.

The main contribution of this paper, presented in Section~\ref{sec:errors}, is an extension to the complexity certification methods in \cite{zeilinger2011certlp,cimini2017certqp,cimini2019complexity,arnstrom2022unifying} that enables numerical errors to be accounted for. The extension builds on a three-step approach consisting of: (i) $\textit{lifting}$ the parameter space to include numerical errors; (ii) $\textit{partitioning}$ the parameter space based on the solver's behaviour; (iii) $\textit{projecting}$ down the new regions onto the nominal parameter space.   

The extension can be used to evaluate different anti-cycling schemes, both theoretically-grounded ones such as \cite{fletcher1993resolving,gill1989practical}, or, as is exemplified in Section \ref{sec:result}, more ad hoc schemes. Moreover, as is also exemplified in the experiments in Section \ref{sec:result}, the extension can be used to select appropriate tolerances for the specific QPs that need to be solved in a linear MPC applications, and the sufficiency of these tolerances can be ascertained a priori for these QPs. 

An additional, minor yet pivotal, contribution of the paper is the abstraction made in Section \ref{sec:par-cert} of the certification methods in \cite{zeilinger2011certlp,cimini2017certqp,cimini2019complexity,arnstrom2022unifying} in terms of parameter-dependent finite automatons, which gives a unified characterization of the methods therein.

\section{Parametric complexity certification methods}
\label{sec:par-cert}
It is well-known (see, e.g., \cite{borrelli2017lmpc}) that the optimization problems that need to be solved in each time-step in linear MPC often take the form
\begin{equation}
  \label{eq:mpqp}
  \begin{aligned}
	x^*(\theta) = &\:\:\:\underset{x}{\text{argmin}}&& \frac{1}{2}x^T H x + f(\theta)^T x\\
				  &\text{subject to} && C x \leq d(\theta),
  \end{aligned}
\end{equation}
where the decision variable $x\in \mathbb{R}^{n_x}$ is related to the control and the parameter $\theta \in \Theta_0 \subseteq \mathbb{R}^{n_{\theta}}$ is related to system states and setpoints. In particular, we assume that $\Theta_0$ is a polyhedron. The objective function is characterized by $H\in\mathbb{S}^{+}_{n_x}$ and the affine function $f:\Theta_0 \to \mathbb{R}^{n_x}$, while the feasible set is characterized by $C\in\mathbb{R}^{m\times n_x}$ and the affine function $d:\Theta_0 \to \mathbb{R}^m$. 

Different parameters $\theta\in \Theta_0$ in \eqref{eq:mpqp} defines different QPs; and since $\theta$ depends on system states and setpoints, both of which exact values at an arbitrary time step are unknown, all possible QPs given by $\theta\in \Theta_0$ might need to be solved online. 
The main goal of the certification methods in \cite{zeilinger2011certlp,cimini2017certqp,cimini2019complexity,arnstrom2022unifying} is to determine $\text{exactly}$ how different active-set algorithms ``behave'' when solving QPs corresponding to any $\theta \in \Theta_0$. Since a complete description of the methods in \certcite\:is out of scope of this paper, we give an abstract representation of both the algorithms that they certify and the certification methods themselves in Section \ref{ssec:gen} and \ref{ssec:cert-gen}, respectively. 
Stripping away implementation-specific details through this abstraction allows us to focus on the main contribution of this paper: a unified strategy, presented in Section \ref{sec:errors}, to analyze the effect of numerical errors in the solvers certified in \certcite. 

Formally proving the correspondence between this abstraction and the actual methods is also out of scope of this paper. To at least make the abstraction plausible to the reader, we use a running example of how the abstract representation maps onto a step performed in a dual active-set method.    

\subsection{Generic algorithm to be certified}
\label{ssec:gen}
By considering a specific QP, i.e., assuming that $\theta$ in \eqref{eq:mpqp} is fixed, the active-set methods certified in \cite{zeilinger2011certlp,cimini2017certqp,cimini2019complexity,arnstrom2022unifying} can be represented as finite automatons \cite{sipser1996introduction}, where the state $q\in Q$ of the automaton is called the \textit{solver state}, and $Q$ is the set of all possible solver states. The solver state at iteration $k$ is denoted $q_k$. 
For those familiar with active-set methods, the solver state relates to the \textit{working set} (see, for example, \cite[§16.5]{nocedal} for an introduction to active-set methods). 
A sequence of solver states $\{q_k\}_{k}$ produced by a solver is called the \textit{behaviour} of that solver (given the QP). We use the notation $\{e_i\}_{i=1}^N$ for a sequence of $N$ elements and, as we did for the solve-state sequence above, often drop $N$ if its cardinality is unimportant. 

In our automaton representation of the active-set algorithms, the state update takes a particular form, based on an \textit{intermediate variable} $z \in \mathbb{R}^{n_z(q)}$ and a set of polyhedra $\{z:A^i z \leq b^i\}_i$, both of which are generated based on the current solver state. For a concrete example of $z$ and $\{z:A^iz\leq b^i\}_i$, see Example \ref{ex:dual}. 
In iteration $k$ the solver state $q_k$ is updated to state $q_{k+1}$ by the transition function $\delta:Q\times \mathbb{Z} \to Q$, where the second argument is an index $i$, given by $i: A^i z_k \leq b^i$. In other words, if $z_k \in \mathcal{Z}^i \triangleq \{z:A^i z\leq b^i\}$, the performed state update is $q_{k+1} = \delta(q_k,i)$. 
This update is well-defined if the polyhedra $\{\mathcal{Z}^i\}_i$ form a partition of $\mathbb{R}^{n_z(q)}$.
\begin{assumption}
  \label{as:z-part}
  The set of polyhedra $\{\mathcal{Z}^i\}_i$ partitions $\mathbb{R}^{n_z(q)}$, i.e., $\mathring{\mathcal{Z}}^i \cap \mathring{\mathcal{Z}}^j = \emptyset$ and $\cup_i \mathcal{Z}^i = \mathbb{R}^{n_z(q)}$.  
\end{assumption}

Assumption \ref{as:z-part} ensures that an index $i: A^i z_k \leq b^i$ exists and is unique for any $z_k$ in the interior of a region. 

The above-mentioned steps are summarized in Algorithm~\ref{alg:gen}.
\begin{algorithm}[H]
  \caption{Generic formulation of the algorithms certified in \cite{zeilinger2011certlp,cimini2017certqp,cimini2019complexity,arnstrom2022unifying} as a finite automaton.}
  \label{alg:gen}
  \begin{algorithmic}[1]
	\Require $q_0$, $\mathbb{Q} \leftarrow \emptyset$
	\Ensure $\mathbb{Q}$
	\State $k\leftarrow 0$
	\Repeat
	\State $\mathbb{Q} \leftarrow \mathbb{Q}\cup\{q_k\}$ 
	\State Generate $\{A^i z \leq b^i\}_i$ and $z_k$ based on $q_k$ 
	\State  $i \leftarrow$ find $i$ such that $A^i z_k\leq b^i$ \label{step:i-loc} 
	\State $q_{k+1} \leftarrow$ $\delta(q_k,i)$ \label{step:state-up} 
	\State $k\leftarrow k+1$
	\Until{$q_k$ marks termination} 
	\State \textbf{return} $\mathbb{Q}$.
  \end{algorithmic}
\end{algorithm}
\begin{example}[Dual active-set algorithms]
 \label{ex:dual}
 To be more concrete, we briefly relate how an iteration of Algorithm \ref{alg:gen} maps onto an iteration of a dual active-set algorithm \cite{daqp}. In particular, we consider, for simplicity, iterations in which primal feasibility is investigated by evaluating whether the primal slack $\mu$ is nonnegative. 

In dual active-set algorithms, if $\mu\geq -\epsilon_p$ (where $\epsilon_p>0$ is a user-specified tolerance) a global solution has been found and the algorithm terminates. Otherwise, if $\mu\ngeq -\epsilon_p$, the most negative component of $\mu$, that is,  ${i = \text{argmin}_{j:\mu<-\epsilon_p} \mu_j}$, is used to update the solver state; concretely, updating the solver state $i$ here means adding $i$ to the working set. The primal slacks $\mu$ that result in the $i$th component being the most negative can be expressed as the polyhedron 
\begin{equation}
  \label{eq:mu-add}
\mathcal{P}^i = \{\mu:\mu_i< -\epsilon_p,\:\: \mu_i \leq \mu_j, i\neq j \},
\end{equation}
and values of $\mu$ that lead to termination can be expressed as the polyhedron 
\begin{equation}
  \label{eq:mu-opt}
  \mathcal{P}^* = \{\mu: \mu \geq -\epsilon_p\}.
\end{equation}

Such an iteration of a dual active-set method, hence, maps onto an iteration of Algorithm~\ref{alg:gen} by letting $z\equiv\mu$ and the polyhedra $\{z:A^i z \leq b^i\}_i$ be the polyhedra defined by \eqref{eq:mu-add} and \eqref{eq:mu-opt}.
\end{example}

\begin{remark}[Other solver modes]
  Example \ref{ex:dual} considers an iteration performed in a particular ``mode'' of the considered active-set methods (cf. Section IV.A in \cite{arnstrom2022unifying}). Iterations in other modes do, however, also take the form of an iteration in Algorithm \ref{alg:gen}. 
\end{remark}
\begin{remark}[Clarifying the output of Algorithm \ref{alg:gen}]
  Since our interest herein is the solver's behaviour rather than the solution it produces, Algorithm \ref{alg:gen} outputs the sequence of solver states $\mathbb{Q}$, while the certified algorithms in practice output the solution to an optimization problem. 
  This solution is, however, completely determined by the final solver state and is, hence, completely determined by  $\mathbb{Q}$.  

\end{remark}
\subsection{Parametric simulation}
\label{ssec:cert-gen}
As mentioned above, representing the considered active-set algorithms by Algorithm \ref{alg:gen} is valid when a specific QP is given, i.e., it assumes that $\theta$ in \eqref{eq:mpqp} is fixed.      
Now, consider instead the entire parametric family of QPs in \eqref{eq:mpqp} parametrized by ${\theta\in\Theta_0}$. A parameter-dependent problem makes the intermediate variable $z$ depend on $\theta$, that is, $z: \Theta \to \mathbb{R}^{n_z(q)}$. 

The parameter dependence of $z$ implies, in turn, that the index $i$ in Step \ref{step:i-loc} becomes parameter dependent. 
Hence, parameters in a parameter region $\Theta$ that yield $i$ in Step \ref{step:i-loc} 
are given by the region 

\begin{equation}
  \label{eq:THi}
  \Theta^i \triangleq \{\theta\in\Theta :A^i z(\theta) \leq b^i\}.
\end{equation} 

Since the state update in Step \ref{step:state-up} is completely determined by $i$ and the current state $q$, each region in \eqref{eq:THi} corresponds to a different update of the solver state.   
The main idea behind the certification methods in \cite{zeilinger2011certlp,cimini2017certqp,cimini2019complexity,arnstrom2022unifying} is to iteratively partition the parameter space into regions of the form \eqref{eq:THi}. Concretely, a parameter region $\Theta$ is partitioned into $\{\Theta^i\}_i$ each time an iteration of Algorithm \ref{alg:gen} is performed. Partitioning the parameter space into finer and finer regions can, hence, be interpreted as simulating the algorithm parametrically, where all parameters in a region signify that they generate the same sequence of solver states, i.e., the same behaviour.


What makes the partitioning performed in \certcite, and hence the certification methods themselves, tractable is the following structure of $z$:
\begin{assumption}[Affine intermediate variable]
  \label{as:affine}
The intermediate variable $z:\Theta \to \mathbb{R}^{n_{z}(q)}$ is an affine function, i.e., $z(\theta) = F_q \theta + g_q$ for some $F_q\in \mathbb{R}^{n_z(q) \times n_{\theta}}$, $g_q\in \mathbb{R}^{n_z(q)}$.
\end{assumption}

\begin{remark}[Affine primal slack]
  To relate back to Example \ref{ex:dual}, it is well-known that the primal slack $\mu$ (which we earlier related to $z$) of KKT-points to \eqref{eq:mpqp} is affine in $\theta$ (see, e.g., \cite[§4]{bemporad2002explicit}).
\end{remark}

By imposing Assumption \ref{as:affine} on $z$, the iterative partitioning of the parameter space described above is done with half-planes: 
\begin{lemma}[Polyhedral partition in parameter space]
  Let Assumption \ref{as:z-part} and \ref{as:affine} hold, i.e., that $\{z:A^i z \leq b^i\}_i$ partition $\mathbb{R}^{n_z(q)}$ and $z$ is an affine function of $\theta$, and assume that $\Theta$ is a polyhedron; then the regions $\{\Theta^i\}_i$ given by \eqref{eq:THi} form a polyhedral partition of $\Theta$. 
\end{lemma}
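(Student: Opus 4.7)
The plan is to verify, in sequence, the three defining properties of a polyhedral partition of $\Theta$: polyhedrality of each $\Theta^i$; coverage $\cup_i \Theta^i = \Theta$; and pairwise-disjoint interiors.

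First I would substitute the affine form $z(\theta)=F_q\theta+g_q$ from Assumption~\ref{as:affine} into \eqref{eq:THi} to rewrite
\begin{equation*}
\Theta^i = \{\theta \in \Theta \,:\, (A^i F_q)\theta \leq b^i - A^i g_q\}.
\end{equation*}
Since $\Theta$ is polyhedral by hypothesis, intersecting it with the finitely many halfspaces $(A^i F_q)\theta \leq b^i - A^i g_q$ keeps it polyhedral, which settles polyhedrality. Coverage follows from Assumption~\ref{as:z-part} applied pointwise: for any $\theta \in \Theta$, the point $z(\theta) \in \mathbb{R}^{n_z(q)} = \cup_i \mathcal{Z}^i$, so $A^i z(\theta) \leq b^i$ holds for some $i$ and hence $\theta \in \Theta^i$. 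Together with $\Theta^i \subseteq \Theta$, this yields $\cup_i \Theta^i = \Theta$.

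For disjoint interiors I would argue by contradiction. Suppose $\theta \in \mathring{\Theta}^i \cap \mathring{\Theta}^j$ for some $i\neq j$. Since a polyhedron's interior is characterised by strict satisfaction of its defining halfspaces, the inequalities $A^i z(\theta) < b^i$ and $A^j z(\theta) < b^j$ would hold component-wise at $\theta$, which in turn would place $z(\theta)$ in both $\mathring{\mathcal{Z}}^i$ and $\mathring{\mathcal{Z}}^j$, contradicting Assumption~\ref{as:z-part}. The step I expect to require the most care is the lift from ``strict inequality on the $\theta$-side'' to ``$z(\theta)$ lying in $\mathring{\mathcal{Z}}^i$'': a row of $A^i F_q$ can vanish identically, making the corresponding $\theta$-constraint vacuous, so that strict satisfaction on the $\theta$-side need not imply strict satisfaction in the original $z$-description. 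The clean fix is to discard those redundant rows from the defining system of $\Theta^i$ (they do not alter the set, nor its interior) before invoking the strict-inequality characterisation; once this bookkeeping is handled, the contradiction closes and interior disjointness is established.
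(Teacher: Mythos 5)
Your polyhedrality and coverage arguments coincide with the paper's: the paper also substitutes $z(\theta)=F_q\theta+g_q$ to write $\Theta^i$ as an intersection of polyhedra, and disposes of the partition property in one line (``follows from Assumption~\ref{as:z-part} and single-valuedness''), of which your pointwise coverage argument is the correct expansion. So up to and including $\cup_i\Theta^i=\Theta$ you are doing exactly what the paper does, just more explicitly.

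The disjoint-interiors step is where there is a genuine gap, and it is precisely at the spot you flagged --- but your proposed fix does not close it. Discarding vanishing rows of $A^iF_q$ only cleans up the $\theta$-side description of $\Theta^i$; the obstruction lives in $z$-space. Suppose a row $a^Tz\le\beta$ of $\mathcal{Z}^i$ satisfies $a^TF_q=0$ with $a\neq 0$ and $a^Tg_q=\beta$. Then the induced $\theta$-constraint is vacuous (so you would discard it), yet \emph{every} point $z(\theta)=F_q\theta+g_q$ lies on that facet of $\mathcal{Z}^i$, so $z(\theta)\notin\mathring{\mathcal{Z}}^i$ for any $\theta$, and no amount of rewriting $\Theta^i$ lets you conclude $z(\theta)\in\mathring{\mathcal{Z}}^i$ from $\theta\in\mathring{\Theta}^i$. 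Worse, the conclusion itself can fail in this degenerate situation: if the affine image $z(\Theta)$ is contained in a common facet of $\mathcal{Z}^i$ and $\mathcal{Z}^j$ (e.g.\ $z(\theta)=(\theta,0)$ with $\mathcal{Z}^1=\{z:z_2\le 0\}$, $\mathcal{Z}^2=\{z:z_2\ge 0\}$), then $\Theta^i=\Theta^j=\Theta$ and the interiors are not disjoint. A correct statement of interior-disjointness therefore needs a nondegeneracy hypothesis --- for instance that $F_q$ has full row rank, or that $z(\Theta)$ is not contained in any facet of any $\mathcal{Z}^i$ --- under which your strict-inequality argument goes through. The paper's one-line proof is subject to the same caveat, so you are not doing worse than the authors; but since you explicitly undertook to prove disjointness of interiors, you should either add such an assumption or weaken the claim (coverage plus the observation that overlaps occur only where $z(\theta)$ hits a shared boundary of the $\mathcal{Z}^i$, which is all the downstream certification argument actually uses).
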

\begin{proof}
  Inserting $z(\theta) = F_q \theta + g_q$ into \eqref{eq:THi} results in $\Theta^i = \{\theta\in \Theta: A^i F_q \theta \leq b^i-A^i g_q\}$, which is an intersection of two polyhedra, i.e., a polyhedron.        
  That these polyhedra form a partition follows directly from Assumption \ref{as:z-part} and that $z(\theta)=F_q \theta + g_q$ is single-valued.  
\end{proof}

A one-dimensional example of the partitioning of a parameter region due to an iteration of Algorithm \ref{alg:gen} when $z(\theta)$ is affine in $\theta$ is visualized in Figure \ref{fig:normal_part}. 

\begin{figure}[h]
  \centering
  \begin{tikzpicture}[scale=0.9, transform shape]
  
  \coordinate (base1) at (1.2,0);
  \coordinate (base2) at (2.2,0);
  \draw[dashed,->] (0,0) -- (3,0) node[right]{$\theta$};
  \draw[ultra thick, violet!50] (0.25,0) -- node[above]{\textcolor{violet!40!black}{$\Theta$}} (2.75,0);  
  
  \draw[thick,->] (2.25,-0.2) to [out=-30,in=-150] node[below]{Partition} (4.75,-0.2);
  
  \begin{scope}[shift={(4,0)}]
  \draw[dashed,->] (0,0) -- (3,0) node[right]{$\theta$};
  \draw[ultra thick, blue!50] (0.25,0) -- node[above]{\textcolor{blue!80!black}{${\Theta}^1$}} (1.2,0);  
  \draw[ultra thick, red!50] (1.2,0.0) -- node[above]{\textcolor{red!80!black}{${\Theta}^2$}} (2.2,0.0);  
  \draw[ultra thick, green!50] (2.2,0) -- node[above]{\textcolor{green!80!black}{${\Theta}^3$}} (2.75,0);  
  \end{scope}
\end{tikzpicture}
  \caption{ One-dimensional illustration of partitioning step in the complexity certification methods in \cite{zeilinger2011certlp,cimini2017certqp,cimini2019complexity,arnstrom2022unifying} when considering exact arithmetic. Each new region corresponds to a different update to the solver state}%
  \label{fig:normal_part}
\end{figure}
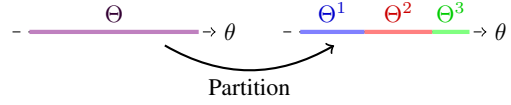

Compiling the above-mentioned ideas into an algorithm that parametrically analyzes solver-state sequences results in Algorithm \ref{alg:cert-gen}, which takes in a parameter region of interest $\Theta_0$ and a starting state $q_0$, and outputs a partition $\{\Theta^j\}_j$ and the corresponding solver-state sequences $\{\mathbb{Q}^j\}_j$. 
The algorithm maintains two stacks, $\mathcal{F}$ and $S$, which contain regions that have been terminated and regions that still need to be processed. By ``processing'' a region we mean performing a parametric iteration of Algorithm \ref{alg:gen}, resulting in the partitioning into regions defined by \eqref{eq:THi}. 

After all new regions $\Theta^i$ have been formed in an iteration, a linear feasibility problem is solved for each region to determine whether  $\Theta^i \neq \emptyset$. All nonempty sets will then be further partitioned (along the same lines as described above), unless the solver state $q$ marks termination, in which case the region and its corresponding solver-state sequence are added to the final partition $\mathcal{F}$.

\begin{algorithm}[H]
  \caption{Generic parametric simulation of Algorithm \ref{alg:gen}, which, specifically, abstracts the methods in  \cite{zeilinger2011certlp, cimini2017certqp, cimini2019complexity,arnstrom2022unifying}.}
  \label{alg:cert-gen}
  \begin{algorithmic}[1]
	\Require $\Theta_0$, $q_0$
	\Ensure $\{(\Theta^j, \mathbb{Q}^j)\}_j$
	\State Push $(\Theta_0, q_0, \emptyset)$ to $S$; $\mathcal{F} \leftarrow \emptyset$
	\While{$S\neq \emptyset$}
	\State $(\Theta,q,\mathbb{Q}) \leftarrow$ pop from $S$ 
	\State $\mathbb{Q} \leftarrow \mathbb{Q}\cup\{q\}$ 
	\State Generate $\{z:A^i z \leq b^i\}_i$ and $z(\theta)$ based on $q$ 
	\State $\{\Theta^j\}_j \leftarrow$ form $\{\theta \in \Theta: A^j z(\theta)\leq b^j \}_j$ \label{step:generate-Thi}
	\For{$\Theta^i\in\{\Theta^j\}_j : \Theta^i \neq \emptyset$}
	\State $q^i \leftarrow \delta(q,i)$  
	\If{$q^i$ marks termination} 
	\State Push $(\Theta^i, \mathbb{Q})$ to $\mathcal{F}$
	\Else
	\State Push $(\Theta^i, q^i, \mathbb{Q})$ to $S$
	\EndIf
	\EndFor
	\EndWhile
	\State \textbf{return} $\mathcal{F}$
  \end{algorithmic}
\end{algorithm}

The usefulness of Algorithm \ref{alg:cert-gen} is that it determines the behaviour of the solver, for \textit{any} parameter $\theta \in \Theta_0$, formalized in the following theorem.  
\begin{theorem}[Correctness]
  Consider a fixed $\tilde{\theta}\in \Theta_0$, and assume that the intermediate variables $z_k = z_k(\tilde{\theta})$ in Algorithm \ref{alg:gen} generate the solver-state sequence $\mathbb{Q}=\{\tilde{q}_k\}_k$. Then there exists a tuple $(\Theta^i,\{q^i_k\}_k)$ in the final partition $\mathcal{F}$ of Algorithm \ref{alg:cert-gen} such that $\tilde{\theta}\in \Theta^i$ and $\tilde{q}_k = q^i_k$, $\forall k$.
\end{theorem}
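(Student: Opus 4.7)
The plan is to prove the theorem by induction on the iteration count $k$, exploiting a direct correspondence between each parametric iteration of Algorithm~\ref{alg:cert-gen} and the scalar iteration of Algorithm~\ref{alg:gen} executed at $\theta = \tilde{\theta}$. The key fact driving the induction is that, by Assumption~\ref{as:affine}, evaluating the affine map $z(\tilde\theta) = F_q\tilde\theta + g_q$ gives the \emph{same} intermediate vector that the concrete algorithm would see, so both algorithms must select the same index $i$ in their respective tests.

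Concretely, I would maintain the following invariant: at every point during the execution of Algorithm~\ref{alg:cert-gen}, either the stack $S$ or the terminal set $\mathcal{F}$ contains a tuple whose region contains $\tilde\theta$, whose stored solver state equals $\tilde q_k$ for the appropriate $k$, and whose stored sequence equals $\{\tilde q_j\}_{j<k}$. The base case is immediate from the initialization $(\Theta_0, q_0, \emptyset) \in S$. For the inductive step, suppose the tuple $(\Theta, \tilde q_k, \{\tilde q_j\}_{j<k})$ with $\tilde\theta\in\Theta$ is popped and processed. Step~\ref{step:generate-Thi} produces the polyhedral partition $\{\Theta^j\}_j$ of $\Theta$ characterized by the preceding lemma. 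Since $\{\mathcal{Z}^j\}_j$ covers $\mathbb{R}^{n_z(\tilde q_k)}$ by Assumption~\ref{as:z-part}, there exists an index $i$ with $A^i z(\tilde\theta) \leq b^i$, and hence $\tilde\theta\in \Theta^i$. Because Algorithm~\ref{alg:gen} applied to $\tilde\theta$ evaluates exactly the same vector $z_k = z(\tilde\theta)$ against the same half-plane family, it selects the same index $i$ in its Step~\ref{step:i-loc}, so $\delta(\tilde q_k, i) = \tilde q_{k+1}$. The tuple pushed to $S$ or $\mathcal{F}$ in Algorithm~\ref{alg:cert-gen} therefore re-establishes the invariant at index $k+1$.

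Termination of the correspondence follows from the assumption that $\{\tilde q_k\}_k$ is in fact the sequence generated by Algorithm~\ref{alg:gen}, which means some $\tilde q_K$ marks termination. The iteration at which the invariant is carried over to $k=K$ must trigger the branch in Algorithm~\ref{alg:cert-gen} that pushes a tuple $(\Theta^i, \mathbb{Q})$ to $\mathcal{F}$ with $\tilde\theta\in\Theta^i$ and $\mathbb{Q} = \{\tilde q_j\}_{j\leq K}$ (up to the bookkeeping convention used for the terminal state). Since $\mathcal{F}$ accumulates these completed tuples and is never modified further, this tuple is present in the final partition and witnesses the theorem.

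The main subtlety I expect is the boundary case, where $z(\tilde\theta)$ lies on the common boundary of several cells $\mathcal{Z}^i$: Assumption~\ref{as:z-part} only guarantees interior disjointness, so the defining inequalities $A^i z(\tilde\theta)\leq b^i$ may be simultaneously satisfied by several $i$. The theorem then implicitly requires that Algorithm~\ref{alg:gen} and the parametric enumeration in Algorithm~\ref{alg:cert-gen} resolve such ties with the same deterministic rule (for instance, a fixed priority on $i$); once this is fixed, the uniqueness claim embedded in the inductive step goes through. A secondary technical point is the off-by-one handling of the terminal state in the pseudocode, which only affects how the final solver-state sequence is stored in $\mathcal{F}$ and does not impact the logical argument.
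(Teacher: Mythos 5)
Your proof is correct and follows essentially the same route as the paper: the paper proves this statement as the special case $\mathcal{E}=\{0\}$ of Theorem~\ref{th:corr-error}, whose proof is exactly your induction on the invariant that $S$ (or $\mathcal{F}$) always holds a tuple whose region contains $\tilde{\theta}$ and whose state matches $\tilde{q}_k$. One small note: the tie-breaking subtlety you raise is not actually needed for this existence claim, since Algorithm~\ref{alg:cert-gen} branches over \emph{all} nonempty regions $\Theta^j$ rather than selecting one, so whichever index $i$ Algorithm~\ref{alg:gen} resolves a boundary tie to, the corresponding region $\Theta^i \ni \tilde{\theta}$ is processed with the matching state update.
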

\begin{proof}
  Since this is a special case of Theorem \ref{th:corr-error} below (specifically when $\mathcal{E}=\{0\}$), we refer the reader to the proof of Theorem \ref{th:corr-error}. 
\end{proof}

As is described in \cite{arnstrom2022unifying}, the sequence of solver states $\mathbb{Q}$ for active-set methods determine \textit{exactly} which sequence of systems of linear equations need to be solved, which can, given specific implementation details of the solver, be mapped to the exact number of floating-point operations.
Hence, since Algorithm \ref{alg:cert-gen} provides the sequence of solver states $\mathbb{Q}$ for any parameter in $\Theta_0$, it can determine the exact number of flops that the active-set solver requires for any parameter in $\Theta_0$.

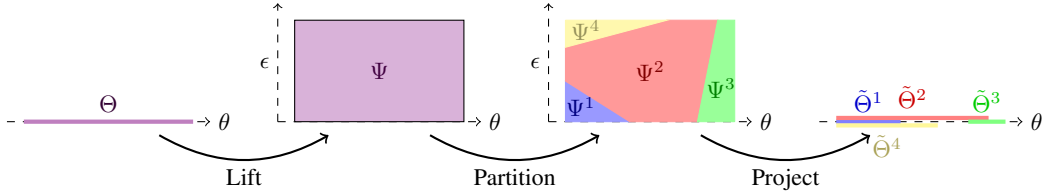
\begin{figure*}[h]
  \centering
  \begin{tikzpicture}[scale=0.9, transform shape]
  
  \coordinate (base1) at (1.2,0);
  \coordinate (base2) at (2.2,0);
  \draw[dashed,->] (0,0) -- (3,0) node[right]{$\theta$};
  \draw[ultra thick, violet!50] (0.25,0) -- node[above]{\textcolor{violet!40!black}{$\Theta$}} (2.75,0);  
  
  \draw[thick,->] (2.25,-0.2) to [out=-30,in=-150] node[below]{Lift} (4.75,-0.2);
  \begin{scope}[shift={(4,0)}]
  \draw[dashed,->] (0,0) -- (3,0) node[right]{$\theta$};  
  \draw[dashed,->] (0,0) -- node[left]{$\epsilon$} (0,1.75);  
	\draw[fill=violet!30] (0.25,0) rectangle (2.75,1.5);  
  \draw[thick,->] (2.25,-0.2) to [out=-30,in=-150] node[below]{Partition} (4.75,-0.2);
  \node[violet!50!black] (psi) at (1.5,0.75) {$\Psi$};
  \end{scope}

  \begin{scope}[shift={(8,0)}]
  \coordinate (base1) at (1.2,0);
  \coordinate (base2) at (2.2,0);
  \coordinate (v1) at (0.25,0.6);
  \coordinate (v2) at (0.25,1.1);
  \coordinate (u1) at (1.75,1.5);
  \coordinate (u2) at (2.5,1.5);

  \draw[dashed,->] (0,0) -- (3,0) node[right]{$\theta$};
  \draw[dashed,->] (0,0) -- node[left]{$\epsilon$} (0,1.75);  
  \draw[fill, blue!40] (0.25,0) -- (base1) -- (v1) --(0.25,0);  
  \draw[fill,red!40] (base1) -- (v1) -- (v2)--(u1)--(u2)--(base2)--(base1);  
  \draw[fill,green!40] (2.75,0) -- (base2) -- (u2)--(2.75,1.5)--(2.75,0);  
  \draw[fill,yellow!40] (0.25,1.5) -- (v2) -- (u1) --(0.25,1.5);

  \node[blue!50!black] (psi) at (0.475,0.225) {$\Psi^1$};
  \node[red!50!black] (psi) at (1.5,0.75) {$\Psi^2$};
  \node[green!50!black] (psi) at (2.55,0.5) {$\Psi^3$};
  \node[yellow!50!black] (psi) at (0.55,1.35) {$\Psi^4$};
  
  \draw[thick,->] (2.25,-0.2) to [out=-30,in=-150] node[below]{Project} (4.75,-0.2);
\end{scope}

  \begin{scope}[shift={(12,0)}]
  \draw[dashed,->] (0,0) -- (3,0) node[right]{$\theta$};
  \draw[ultra thick, blue!50] (0.25,0) -- node[above]{\textcolor{blue!80!black}{$\tilde{\Theta}^1$}} (1.2,0);  
  \draw[ultra thick, red!50] (0.25,0.055) -- node[above]{\textcolor{red!80!black}{$\tilde{\Theta}^2$}} (2.5,0.055);  
  \draw[ultra thick, green!50] (2.2,0) -- node[above]{\textcolor{green!80!black}{$\tilde{\Theta}^3$}} (2.75,0);  
  
  \draw[ultra thick, yellow!50] (0.25,-0.055) -- node[below]{\textcolor{yellow!60!black}{$\tilde{\Theta}^4$}} (1.75,-0.055);  
  \end{scope}
\end{tikzpicture}
  \caption{Conceptual illustration of the proposed lift-partition-project scheme to analyze intermediate errors.}
  \label{fig:lpp}
\end{figure*}
\section{Analyzing numerical errors}
\label{sec:errors}

Now, consider the case when there are numerical errors in the intermediate variable $z$, originating from, for example, round-off errors. That is, instead of $z(\theta)$ we consider $\tilde z(\theta,\epsilon) = z(\theta) + \epsilon$, where $\epsilon \in \mathcal{E}$ models the error.

To analyze how such errors affect the partitioning at Step \ref{step:generate-Thi} in Algorithm \ref{alg:cert-gen}, we propose a three-step approach: lift, partition, and project. 

First we \textit{lift} the polyhedron $\Theta$ to $\Psi = \Theta \times \mathcal{E}$, resulting in regions of the form
\begin{equation}
  \label{eq:psii}
  \Psi^i \triangleq \{\theta \in \Theta, \epsilon \in \mathcal{E} : A^i(z(\theta)+\epsilon) \leq  b^i \}.
\end{equation}
Forming the regions in \eqref{eq:psii} comprises the \textit{partition} step.
Trivially, yet importantly, we have that the nominal $\Theta^i$ can be recovered from $\Psi^i$ by fixing $\epsilon=0$: 
\begin{proposition}
  \label{prop:slice}
  For the region $\Theta^i$ defined in \eqref{eq:THi}, ${\Theta^i = \{\theta\in \Theta : (\theta,0)\in\Psi^i\}}$, where $\Psi^i$ is defined in \eqref{eq:psii}.
\end{proposition}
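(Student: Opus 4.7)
The plan is to establish the set equality by direct substitution of $\epsilon = 0$ into the defining inequalities of $\Psi^i$, and then verify both inclusions. The argument is essentially immediate once one notices that the definitions of $\Theta^i$ and $\Psi^i$ differ only by the additive error term $\epsilon$ in the latter.

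First I would fix a parameter $\theta \in \Theta$ and examine the condition $(\theta, 0) \in \Psi^i$. By the definition in \eqref{eq:psii}, this requires $\theta \in \Theta$, $0 \in \mathcal{E}$, and $A^i(z(\theta) + 0) \leq b^i$, which simplifies to $A^i z(\theta) \leq b^i$. But this last condition is exactly the defining inequality of $\Theta^i$ in \eqref{eq:THi}. Hence $\theta \in \Theta^i \Longleftrightarrow (\theta,0) \in \Psi^i$, which establishes the claimed equality.

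The one mild technicality is that the implication $(\theta,0) \in \Psi^i \Rightarrow \theta \in \Theta^i$ implicitly requires $0 \in \mathcal{E}$, since otherwise $(\theta,0)$ could never belong to $\Psi^i$ and the right-hand set would be empty. This is the natural modeling assumption (the no-error case must be representable in the lifted error set), so I would state it explicitly at the outset and then proceed. I expect no real obstacle; the proof is essentially a one-line verification and its purpose is to justify interpreting the nominal partition as the zero-error slice of the lifted partition, which is what makes the subsequent projection step in the lift–partition–project scheme meaningful.
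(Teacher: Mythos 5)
Your proof is correct and matches the paper's treatment: the paper states this proposition without proof, asserting it as a trivial consequence of substituting $\epsilon=0$ into \eqref{eq:psii}, which is exactly the verification you carry out. One small quibble: the inclusion that genuinely requires $0\in\mathcal{E}$ is $\Theta^i \subseteq \{\theta\in\Theta : (\theta,0)\in\Psi^i\}$ (the implication you name holds vacuously when $0\notin\mathcal{E}$), but your substantive observation---that the proposition silently assumes $0\in\mathcal{E}$, an assumption the paper only makes explicit later in Lemma~\ref{lem:part-prop}---is well taken.
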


Lifting the polyhedron every time we want to analyze numerical errors would repeatedly increase the dimension of the extended parameter space, which quickly becomes numerically intractable. Therefore, we include a third step, a \textit{projection} step, which reduces the extended parameter space back to $\mathbb{R}^{n_{\theta}}$. 
Explicitly, projecting $\Psi^i$ onto $\Theta$ results in the region 
\begin{equation}
  \label{eq:thtilde}
  \tilde{\Theta}^i = \{\theta \in \Theta : \exists \epsilon\in \mathcal{E},\:\: (\theta,\epsilon) \in \Psi^i\}. 
\end{equation}
When $\mathcal{E}$ is a polyhedron, this projection can be carried out in practice using, for example, Fourier-Motzkin elimination \cite{dantzig1973fouriermotzkin}. In particular, we show in Section \ref{sssec:simplified-error} that the regions in \eqref{eq:thtilde} can be expressed in closed form when $\mathcal{E}$ is a hypercube.

The lift-partition-project scheme is summarized in Algorithm \ref{alg:lpp} and illustrated in Figure \ref{fig:lpp}.

\begin{algorithm}[H]
  \caption{Lift-partition-project scheme to extend the certification methods in \cite{zeilinger2011certlp, cimini2017certqp, cimini2019complexity, arnstrom2022unifying} to be able to analyze effects of numerical errors.}
  \label{alg:lpp}
  \begin{algorithmic}[1]
	\Require Region $\Theta$, half-planes $\{z:A^i z\leq b^i\}_{i=1}^N$, $ z(\theta)$, $\mathcal{E}$
	\Ensure $\{\tilde{\Theta}^i\}^N_{i=1}$
	\State Lift $\Theta$ to $\Psi = \Theta \times \mathcal{E}$. 
	\For {$i \in \{1,\dots, N\}$}  
	\State $\Psi^i \leftarrow$ partition $\Psi$ according \eqref{eq:psii}. 
	\State $\tilde{\Theta}^i \leftarrow$ project $\Psi^i$ onto $\Theta$ according to \eqref{eq:thtilde}.  
	\EndFor
  \State \textbf{return} $\{\tilde{\Theta}^i\}^N_{i=1}$
  \end{algorithmic}
\end{algorithm}

Replacing Step \ref{step:generate-Thi} in Algorithm \ref{alg:cert-gen} with the lift-partition-project scheme in Algorithm~\ref{alg:lpp} allows for absolute numerical errors to be correctly accounted for, as is formalized in the following theorem, which is the main result of this paper. 

\begin{theorem}[Correctness under numerical errors]
  \label{th:corr-error}
  Consider a given starting state $q_0$ and a sequence of errors $\{\epsilon_k\}_k$, with $\epsilon_k \in \mathcal{E}_k$. Moreover, assume that the intermediate variables $z_k=z_k(\tilde{\theta})+\epsilon_k$ in Algorithm \ref{alg:gen} generate the solver-state sequence $\mathbb{Q}=\{\tilde{q}_k\}_k$, where $\tilde{\theta}\in \Theta_0$.
  Then, if Algorithm \ref{alg:cert-gen} is started with $q_0$ and $\Theta_0$, and if Step \ref{step:generate-Thi} is replaced by the lift-partition-project scheme in Algorithm \ref{alg:lpp} (with $\mathcal{E}=\mathcal{E}_k$), 
  there exists a tuple $(\Theta^i,\{q^i_k\}_k)$ in the final partition $\mathcal{F}$, generated by Algorithm \ref{alg:cert-gen}, such that $\tilde{\theta}\in \Theta^i$ and $\tilde{q}_k = q^i_k$, $\forall k$.
\end{theorem}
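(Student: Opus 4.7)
My plan is to prove the result by induction on the iteration index $k$ of Algorithm \ref{alg:gen}, maintaining the invariant that at every step there is a tuple on Algorithm \ref{alg:cert-gen}'s stack $S$ (or eventually in $\mathcal{F}$) whose parameter region contains $\tilde\theta$ and whose recorded solver state equals $\tilde q_k$ with recorded history $\{\tilde q_0, \dots, \tilde q_{k-1}\}$. The base case is immediate from initialization: the tuple $(\Theta_0, q_0, \emptyset)$ is pushed onto $S$, and by assumption $\tilde\theta \in \Theta_0$ and $\tilde q_0 = q_0$.

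For the inductive step, I would consider the iteration in which a tuple $(\Theta, \tilde q_k, \{\tilde q_0, \dots, \tilde q_{k-1}\})$ with $\tilde\theta \in \Theta$ is popped. The key observation is that the index $i_k$ selected in Step \ref{step:i-loc} of the nominal run of Algorithm \ref{alg:gen} satisfies $A^{i_k}(z_k(\tilde\theta) + \epsilon_k) \leq b^{i_k}$ with $\epsilon_k \in \mathcal{E}_k$, so by definition \eqref{eq:psii} the pair $(\tilde\theta, \epsilon_k)$ belongs to $\Psi^{i_k}$. Applying the lift-partition-project replacement (Algorithm \ref{alg:lpp}) then yields $\tilde\theta \in \tilde\Theta^{i_k}$ by the definition of the projection in \eqref{eq:thtilde}. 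In particular $\tilde\Theta^{i_k}$ is nonempty (witnessed by $\tilde\theta$), so Algorithm \ref{alg:cert-gen} processes it and pushes a new tuple whose region is $\tilde\Theta^{i_k}$, whose state is $\delta(\tilde q_k, i_k) = \tilde q_{k+1}$, and whose history is $\{\tilde q_0, \dots, \tilde q_k\}$, matching the state update performed in the actual run of Algorithm \ref{alg:gen}. If $\tilde q_{k+1}$ marks termination the tuple is placed in $\mathcal{F}$; otherwise it remains in $S$. Either way, the invariant is propagated to iteration $k+1$.

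Since Algorithm \ref{alg:gen} is assumed to produce the sequence $\{\tilde q_k\}_k$ (finite if it terminates), iterating the induction a matching number of times yields a tuple $(\Theta^i, \{q^i_k\}_k) \in \mathcal{F}$ with $\tilde\theta \in \Theta^i$ and $q^i_k = \tilde q_k$ for every $k$, which is exactly the claim. The special case $\mathcal{E}_k = \{0\}$ collapses the lift-partition-project step to the nominal partitioning of Step \ref{step:generate-Thi}, which also handles the earlier unproven correctness theorem as claimed.

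The main obstacle I anticipate is being careful about boundary behaviour rather than the logic itself. The perturbed intermediate value $z_k(\tilde\theta) + \epsilon_k$ may lie on the boundary shared by several polyhedra $\mathcal{Z}^i$, so the index $i_k$ need not be unique; Assumption \ref{as:z-part} only rules out overlap of interiors. Since the theorem only asserts existence, however, any consistent choice of $i_k$ made by the nominal solver can be used to drive the induction, and the corresponding $\tilde\Theta^{i_k}$ is always among the regions produced by Algorithm \ref{alg:lpp}. A secondary point worth checking is that the stack-based control flow of Algorithm \ref{alg:cert-gen} eventually pops every pushed tuple, which is guaranteed by the loop condition $S \neq \emptyset$ together with the assumed termination of the solver.
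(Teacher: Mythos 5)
Your proposal is correct and follows essentially the same inductive argument as the paper: the key step in both is that $A^{i}(z_k(\tilde\theta)+\epsilon_k)\leq b^{i}$ with $\epsilon_k\in\mathcal{E}_k$ gives $(\tilde\theta,\epsilon_k)\in\Psi^{i}$ and hence $\tilde\theta\in\tilde\Theta^{i}$ after projection, which propagates the invariant through the state update. Your additional remarks on nonemptiness, the termination branch, and boundary ties are sensible refinements but do not change the substance of the argument.
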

\begin{proof}
  Induction step: 
  Assume that there exists a tuple $(\Theta,q_k)$ in $S$ at iteration $k$ of Algorithm \ref{alg:cert-gen} such that $\tilde{\theta}\in \Theta$ and $q_k = \tilde{q}_k$. Moreover, let $i$ be the index determined in Step \ref{step:i-loc} of Algorithm \ref{alg:gen}, i.e., $A^i({z}_k(\tilde{\theta}) + \epsilon_k) \leq b^i$.  Then, since $\mathcal{E}=\mathcal{E}_k$ is assumed to be used to generate $\tilde{\Theta}^i$, we have $(\tilde{\theta},\epsilon_k) \in \Psi^i$, which in turn implies that $\tilde{\theta} \in \tilde{\Theta}^i$. Parameters in this region result in the state update 
  \begin{equation*}
  q_{k+1} = \delta(q_k,i) = \delta(\tilde{q}_k,i) = \tilde{q}_{k+1}. 
  \end{equation*}
  Hence, the tuple $(\tilde{\Theta}^i, q_{k+1})$ such that $\tilde{\theta} \in \tilde{\Theta}^i$ and $q_{k+1} = \tilde{q}_{k+1}$ is added to the stack $S$. 
  \newline Base case: In the start of Algorithm \ref{alg:cert-gen} the tuple $(\Theta_0, q_0)$ is added to $S$, where from the premise we have that $\tilde{\theta}\in\Theta_0$ and $q_0 \triangleq \tilde{q}_0$. 
\end{proof}

Another important property when Step \ref{step:generate-Thi} in Algorithm \ref{alg:cert-gen} is replaced with the lift-partition-project scheme in Algorithm~\ref{alg:lpp} is that, if the error model $\mathcal{E}$ is correct, no redundant regions are spawned, in the following sense: 

\begin{theorem}[Nonredundancy of final partition]
  \label{th:nonred}
  Assume that Algorithm \ref{alg:cert-gen} is started with $q_0$ and $\Theta_0$, and that Step \ref{step:generate-Thi} is replaced by the lift-partition-project scheme in Algorithm \ref{alg:lpp} with $\mathcal{E}=\mathcal{E}_k$ at iteration $k$. Moreover, let $(\Theta, \{q_k\}_k)$ be any tuple in $\mathcal{F}$. 
  Then, for any $\tilde{\theta} \in \Theta$ there exists a sequence of errors $\{\epsilon_k\}_k$, $\epsilon_k \in \mathcal{E}_k$, that make Algorithm \ref{alg:gen} generate the solver-state sequence  $\{\tilde{q}_k\}_k$ which satisfies $\tilde{q}_k=q_k$, $\forall k$, if the intermediate variable at iteration $k$ of Algorithm \ref{alg:gen} is $z_k = z_k(\theta)+\epsilon_k$ and the starting state is $q_0$. 
\end{theorem}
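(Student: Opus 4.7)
The plan is to prove this by induction on the iteration index $k$, mirroring the structure of the proof of Theorem~\ref{th:corr-error}, but reversing the logical direction: instead of propagating $\tilde{\theta}\in\Theta^i$ forward from a given error sequence, I would construct the error sequence backwards from the region chain that produced $(\Theta,\{q_k\}_k)$ in $\mathcal{F}$. The key observation is that, by the way Algorithm~\ref{alg:cert-gen} builds the stack, the tuple $(\Theta,\{q_k\}_k)$ was produced by a nested chain of regions $\Theta_0 \supseteq \tilde{\Theta}_0^{i_0}\supseteq \tilde{\Theta}_1^{i_1}\supseteq \cdots \supseteq \Theta$, where at iteration $k$ the lift-partition-project scheme generated $\tilde{\Theta}_k^{i_k}$ from the parent $\Theta_k$, and the accompanying solver state was updated as $q_{k+1}=\delta(q_k,i_k)$.

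For the inductive step, suppose I have already constructed $\epsilon_0,\ldots,\epsilon_{k-1}$ with $\epsilon_j\in\mathcal{E}_j$ such that running Algorithm~\ref{alg:gen} with these errors produces the solver states $q_0,\ldots,q_k$. Since $\tilde{\theta}\in\Theta\subseteq \tilde{\Theta}_k^{i_k}$, the projection definition \eqref{eq:thtilde} guarantees the existence of some $\epsilon_k\in\mathcal{E}_k$ such that $(\tilde{\theta},\epsilon_k)\in\Psi_k^{i_k}$; equivalently, $A^{i_k}(z_k(\tilde{\theta})+\epsilon_k)\leq b^{i_k}$. Picking this $\epsilon_k$, Algorithm~\ref{alg:gen} at iteration $k$ has $z_k = z_k(\tilde{\theta})+\epsilon_k \in \mathcal{Z}^{i_k}$, so Step~\ref{step:i-loc} selects index $i_k$ (uniquely when $z_k$ lies in the interior, and allowed as a valid tie-break on boundaries by Assumption~\ref{as:z-part}), and Step~\ref{step:state-up} performs the update $q_{k+1}=\delta(q_k,i_k)$, which matches the state prescribed by the tuple. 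The base case $k=0$ is immediate since $\tilde{\theta}\in\Theta_0$ and $q_0$ is the prescribed starting state.

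The argument is thus essentially dual to that of Theorem~\ref{th:corr-error}: there, one knew $\epsilon_k$ and used containment in $\Psi^i$ to derive containment in $\tilde{\Theta}^i$; here, one knows $\tilde{\theta}\in\tilde{\Theta}^{i_k}_k$ and uses the projection-based definition of $\tilde{\Theta}^{i_k}_k$ to extract a witness $\epsilon_k$. The only delicate point, which I expect to be the main obstacle to phrase cleanly, is handling the case where $z_k$ lands on the boundary between multiple polyhedra $\mathcal{Z}^i$; this is where the tie-breaking semantics of Step~\ref{step:i-loc} matter. Since that step is specified only as ``find $i$ such that $A^iz_k\leq b^i$'', existence of at least one valid $i$ (namely $i_k$) suffices for the proof, so no additional assumption on the tie-break is required. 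Beyond that minor technicality, the induction closes directly and the proof is a short mirror image of the proof of Theorem~\ref{th:corr-error}.
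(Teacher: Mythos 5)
Your proposal is correct and follows essentially the same route as the paper's proof: an induction over iterations in which the witness $\epsilon_k$ is extracted from the fact that $\tilde{\theta}$ lies in the projected region $\tilde{\Theta}^{i_k}$, so that by the definition in \eqref{eq:thtilde} some $\epsilon_k\in\mathcal{E}_k$ satisfies $(\tilde{\theta},\epsilon_k)\in\Psi^{i_k}$ and hence $A^{i_k}(z_k(\tilde{\theta})+\epsilon_k)\leq b^{i_k}$, forcing the update $q_{k+1}=\delta(q_k,i_k)$. Your explicit remark about tie-breaking on region boundaries is a point the paper's proof passes over silently, but it does not change the argument.
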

\begin{proof}
  Induction step: assume that $\tilde{q}_k = q_k$ at iteration $k$ in Algorithm \ref{alg:gen} and 
  let $\mathcal{E}_k^i \triangleq \{\epsilon \in \mathcal{E}_k : (\tilde{\theta},\epsilon) \in \Psi^i \}$. By construction of Algorithm \ref{alg:cert-gen}, there exists a $j$ such that $q_{k+1}=\delta(q_k,j)$ and such that $\tilde{\theta} \in \tilde{\Theta}^j$, where the latter implies that $\mathcal{E}^j_k \neq \emptyset$ from \eqref{eq:thtilde}.
  Therefore selecting $\epsilon_k \in \mathcal{E}^j_k \subseteq \mathcal{E}_k$ results in $A^j(z_k(\tilde{\theta})+\epsilon_k) \leq b^j$ and the resulting state update in Algorithm \ref{alg:gen} is subsequently ${\tilde{q}_{k+1} = \delta(\tilde{q}_k,j) = \delta(q_k,j) = q_{k+1}}$. \newline 
  Base case: from the premise we have that $\tilde{q}_0 = q_0$. 
\end{proof}

\renewcommand{\baselinestretch}{0.96}
\begin{remark}[Accounting for compounding errors]
  By performing a projection step we do not investigate compounded effects of numerical errors explicitly, in the sense that we do not investigate exactly which solver-state changes an error of $\epsilon_1$ followed by an error of $\epsilon_2$ generate. Instead, compound effects are accounted for when selecting $\mathcal{E}$ (i.e., selecting $\mathcal{E}$ larger as errors compound). By making sure that $\epsilon_1 \in \mathcal{E}_1$ and that an upper-bound of the compound effect of $\epsilon_1$ and $\epsilon_2$ are contained in $\mathcal{E}_2$ in the next iteration, we know from Theorem~\ref{th:corr-error} that a region corresponding to the solver-state sequence generated by $\epsilon_1$ followed by $\epsilon_2$ will be analyzed. Because of the projection we do not, however, know the particular region. Resetting properties of some active-set methods (see, e.g., Section III.B in \cite{daqp}), ensure that $\mathcal{E}$ does not have to be increased indefinitely to account for compounding errors.
\end{remark}

\begin{remark}[Errors not captured in $\tilde{z}$]
    Another source of numerical errors in active-set methods is low-rank updates that are performed to matrix factorizations. Since the particular matrix factorizations differ for the active-set solvers that are cover by the certification methods in \certcite, we will not go into detail about those errors. Instead we note that these low-rank updates almost always only depend on $H$ and $C$ from the mpQP in \eqref{eq:mpqp}, and the solver states $\{q_k\}_k$ up until the current iteration; that is, the low-rank updates are independent of the parameter $\theta$, which allows for \textit{exactly} the same low-rank updates that are used online to be used in the certification method. Hence, the same errors will be present during the certification and will, therefore, be directly accounted for in the complexity certificates. 
\end{remark}

\subsection{Partition properties}
Generally the region $\tilde{\Theta}^i$ in \eqref{eq:thtilde} produced after the lift-partition-project step in Algorithm \ref{alg:lpp} is larger than the corresponding region $\Theta^i$ in \eqref{eq:THi}, which we formalize in the following lemma. 
\begin{lemma}[Partition properties]
  \label{lem:part-prop}
  Assume that $\mathcal{E}$ contains the origin, i.e., $0 \in \mathcal{E}$; then the following relationship between $\Theta^i$ in \eqref{eq:THi} and $\tilde{\Theta}^i$ in \eqref{eq:thtilde} hold 
  \begin{enumerate}[(i)]
    \item $\tilde{\Theta}^i \supseteq \Theta^i$,
	\item $\cup_{i} \tilde{\Theta}^i = \Theta$. 
  \end{enumerate}
\end{lemma}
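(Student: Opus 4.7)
My plan is to observe that both claims are essentially direct corollaries of the definitions, with the hypothesis $0 \in \mathcal{E}$ doing all the work: it guarantees that the lifted region $\Psi$ contains the ``slice'' $\Theta\times\{0\}$, so nominal behavior is still represented after lifting. The key link I will exploit is Proposition~\ref{prop:slice}, together with the fact that projecting $\Psi^i$ back onto $\Theta$ can only enlarge the set of admissible parameters.

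For part (i), I would take an arbitrary $\theta \in \Theta^i$. By \eqref{eq:THi} this means $\theta \in \Theta$ and $A^i z(\theta) \leq b^i$. Using $\epsilon = 0 \in \mathcal{E}$ we then have $A^i\bigl(z(\theta) + 0\bigr) = A^i z(\theta) \leq b^i$, so $(\theta,0) \in \Psi^i$ by \eqref{eq:psii}. Consequently, since $\epsilon=0$ is a valid witness, $\theta \in \tilde{\Theta}^i$ by \eqref{eq:thtilde}. This establishes $\Theta^i \subseteq \tilde{\Theta}^i$.

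For part (ii), one inclusion is immediate: \eqref{eq:thtilde} demands $\theta \in \Theta$, so $\cup_i \tilde{\Theta}^i \subseteq \Theta$. For the reverse, I would take any $\theta \in \Theta$ and invoke Assumption~\ref{as:z-part}, which guarantees that $\{\mathcal{Z}^i\}_i$ covers $\mathbb{R}^{n_z(q)}$. Since $z(\theta)\in\mathbb{R}^{n_z(q)}$, there is some index $i$ with $A^i z(\theta) \leq b^i$, i.e., $\theta \in \Theta^i$. Applying part (i) then yields $\theta \in \tilde{\Theta}^i \subseteq \cup_j \tilde{\Theta}^j$, completing the proof.

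There is no genuine obstacle here — the lemma is a sanity check on the construction rather than a deep statement. The only place where some care is needed is to notice that part (ii) relies on both the nominal covering property of $\{\mathcal{Z}^i\}_i$ (Assumption~\ref{as:z-part}) and on the enlargement from part (i); without $0\in\mathcal{E}$ neither piece would go through, which is exactly why the hypothesis is assumed.
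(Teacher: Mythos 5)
Your proof is correct and follows essentially the same route as the paper: part (i) uses $\epsilon=0\in\mathcal{E}$ as a witness so that the slice/projection argument of Proposition~\ref{prop:slice} gives $\Theta^i\subseteq\tilde{\Theta}^i$, and part (ii) sandwiches $\cup_i\tilde{\Theta}^i$ between $\Theta$ and $\cup_i\Theta^i=\Theta$ using the covering property from Assumption~\ref{as:z-part}. If anything, your element-wise argument for (i) is slightly more explicit than the paper's, which only remarks informally on when the inclusion can be strict.
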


\begin{proof}
  (i): If $\mathcal{E}$ only contains the origin, i.e., $\mathcal{E} = \{0\}$, Proposition~\ref{prop:slice} implies that $\tilde{\Theta}^i= \Theta^i$. If $\mathcal{E}$ contains more elements than the origin, there might $\exists \tilde{\epsilon}^i \in \mathcal{E}$ such that $(\tilde{\theta}^i,\tilde{\epsilon}^i) \in \Psi^i$ and such that $(\tilde{\theta}^i, 0) \notin \Psi^i$. 
  (For an example, compare $\Theta^2$ in Figure \ref{fig:normal_part} with  $\tilde{\Theta}^2$ in Figure \ref{fig:lpp}.)

  (ii): From the definition of $\tilde{\Theta}^i$ in \eqref{eq:thtilde} we have that $\tilde{\Theta}^i \subseteq \Theta$, which implies that $\cup_i \tilde{\Theta}^i \subseteq \Theta$. Next we have from (i) that 
  \begin{equation}
	\cup_i \tilde{\Theta}^i  \supseteq  \cup_i \Theta^i = \Theta,
  \end{equation}
since $\{\Theta^i\}_i$ forms a partition of $\Theta$.
  Taken together $\Theta \subseteq \cup_i \tilde{\Theta}^i \subseteq \Theta$, i.e., $\cup_i \tilde{\Theta} = \Theta$.
\end{proof}

\begin{corollary}
  \label{cor:part-prop}
  Under the same assumptions as in Lemma~\ref{lem:part-prop} the following relationships hold
  \begin{enumerate}[(i)]
	\item $\Theta^i = \emptyset \nRightarrow \tilde{\Theta}^i = \emptyset$ ,
	\item $\Theta^i \cap \Theta^j = \emptyset \nRightarrow \tilde{\Theta}^i \cap \tilde{\Theta}^j  = \emptyset$,
  \end{enumerate} 
\end{corollary}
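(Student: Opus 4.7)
The key observation is that both items assert \emph{non}-implications, so to establish them it suffices to exhibit a single counterexample in each case. My plan is to leverage the geometric picture already encoded in Figure~\ref{fig:normal_part} and Figure~\ref{fig:lpp}, which were used in the proof of Lemma~\ref{lem:part-prop}, and to argue that these pictures already witness both non-implications.

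For (i), the plan is to point out that a region $\Theta^i$ is empty precisely when the affine image $z(\Theta)$ does not meet the polyhedron $\{z : A^i z \leq b^i\}$, whereas the lifted-partition-project region $\tilde{\Theta}^i$ is empty only when the Minkowski-type set $z(\Theta) + \mathcal{E}$ misses that polyhedron. Because $\mathcal{E}$ may contain points far from the origin, there can be $\theta \in \Theta$ and $\epsilon \in \mathcal{E}$ with $z(\theta) + \epsilon$ inside $\{z : A^i z \leq b^i\}$ even when $z(\theta)$ itself never enters it. A concrete instance is the region labelled $\tilde{\Theta}^4$ in Figure~\ref{fig:lpp}, which has no counterpart in Figure~\ref{fig:normal_part}; equivalently, the index $i=4$ satisfies $\Theta^4 = \emptyset$ while $\tilde{\Theta}^4 \neq \emptyset$.

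For (ii), I would argue as a direct consequence of Lemma~\ref{lem:part-prop}(i): since $\tilde{\Theta}^i \supseteq \Theta^i$ and the inclusion can be strict, two nominally disjoint regions can grow under the lift-partition-project step to the point of intersecting. The same figure supplies a counterexample: $\Theta^1$ and $\Theta^2$ in Figure~\ref{fig:normal_part} share only a boundary point, whereas their lifted-and-projected counterparts $\tilde{\Theta}^1$ and $\tilde{\Theta}^2$ in Figure~\ref{fig:lpp} clearly overlap along a segment. A sketch of why: any $\theta$ on the original boundary between $\Theta^1$ and $\Theta^2$ satisfies $A^1 z(\theta) \leq b^1$ and $A^2 z(\theta) \leq b^2$ simultaneously, and by continuity a small neighbourhood of such $\theta$ produces $\epsilon$-perturbed intermediate variables lying in either polyhedron, placing that neighbourhood in both $\tilde{\Theta}^1$ and $\tilde{\Theta}^2$.

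No serious obstacle is expected here, since both statements are disproofs of implications rather than positive claims; the only subtlety is ensuring the referenced figures are accepted as valid counterexamples. To be safe, I would state the geometric argument in words alongside the figure reference so that the proof is self-contained even without inspecting the figures.
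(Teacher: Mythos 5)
Your overall strategy---reading both items as non-implications and refuting each by a counterexample, using Figures~\ref{fig:normal_part} and~\ref{fig:lpp} as witnesses together with the containment $\tilde{\Theta}^i \supseteq \Theta^i$ from Lemma~\ref{lem:part-prop}(i)---is exactly how the paper treats this corollary: it supplies no formal proof, and the discussion that follows it points to $\Theta^4=\emptyset$ versus $\tilde{\Theta}^4\neq\emptyset$ for (i) and to the visible overlaps in Figure~\ref{fig:lpp} for (ii). Your argument for (i) is correct and essentially identical to the paper's.

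For (ii), however, your chosen witness undercuts itself. You take $\Theta^1$ and $\Theta^2$, which you note ``share only a boundary point''---but a shared boundary point means $\Theta^1\cap\Theta^2\neq\emptyset$: the regions in \eqref{eq:THi} are cut out by non-strict inequalities, and Assumption~\ref{as:z-part} only requires disjoint \emph{interiors}, so adjacent nominal regions are generally not disjoint as sets. Hence the hypothesis $\Theta^i\cap\Theta^j=\emptyset$ of the implication you are refuting is not satisfied by this pair; indeed, your own continuity argument (that the boundary $\theta$ satisfies $A^1 z(\theta)\leq b^1$ and $A^2 z(\theta)\leq b^2$ simultaneously) proves precisely that the point lies in both nominal regions. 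The fix is easy: choose a pair that is genuinely disjoint nominally, e.g.\ $\Theta^2$ and $\Theta^4$ from the same figures (since $\Theta^4=\emptyset$, the nominal intersection is trivially empty, while $\tilde{\Theta}^2\cap\tilde{\Theta}^4\neq\emptyset$ in Figure~\ref{fig:lpp}), or two regions separated by a third whose width is smaller than the growth induced by $\mathcal{E}$. With that substitution your proof is complete and matches the paper's intended reasoning.
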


Corollary \ref{cor:part-prop}, specifically (i), implies that some empty regions that are pruned when considering exact arithmetic might not be empty when considering numerical errors, and, hence, require further investigation. This implies, in turn, that some solver-state sequences that do not arise in the exact analysis might arise when the additional freedom of $\epsilon$ is considered. An example of this can be seen in Figure \ref{fig:lpp} where $\tilde{\Theta}^4\neq \emptyset$, while from Figure \ref{fig:normal_part} we have that $\Theta^4=\emptyset$.

Moreover, note that Corollary \ref{cor:part-prop}, specifically (ii), implies that there might be some overlap among the regions in $\{\tilde{\Theta}^i\}_i$, i.e., $\{\tilde{\Theta}^i\}_i$ is not a partition (but a cover) of $\Theta$. Such overlaps can also be seen in Figure \ref{fig:lpp}. The intuition behind why overlaps arise is that the \textit{same} parameter $\theta\in \Theta$ might lead to different solver-state changes for \textit{different} values of $\epsilon\in \mathcal{E}$. 

While $\{\tilde{\Theta}^i\}_i$ is not a partition of $\Theta$, it is, from Lemma \ref{lem:part-prop}, specifically (ii), a cover of $\Theta$. That is, no additional parameters $\theta\notin \Theta$ are spawned by Algorithm \ref{alg:lpp}, and all $\theta \in \Theta$ are still contained in the partition after the partitioning and projection (i.e, no holes in $\Theta$ are generated).   

\subsection{Modeling the error}
The correctness of the main results in Theorem \ref{th:corr-error} and \ref{th:nonred} rely on any possible absolute error being contained in the error model $\mathcal{E}$, i.e., if we, for the set of all true absolute errors $\mathcal{E}^*$, have that $\mathcal{E}^* \subseteq \mathcal{E}$. 
Likewise, the smaller $\mathcal{E}\setminus \mathcal{E}^*$ is, the less conservative the analysis becomes. 
In this paper we are not interested in how to determine $\mathcal{E}$. Such models require insight into the particular active-set solver considered, after which standard methods in numerical analysis can be applied (see, e.g., \cite{wilkinson1960error,moore1966interval}).
Our focus is instead on how a \textit{given} model $\mathcal{E}$ can be incorporated in the certification methods in \certcite. 

Some aspects of $\mathcal{E}$ related to the parametric setting, which is non-standard in the numerical analysis literature, is briefly mentioned below. Namely, how relative errors can be transformed into absolute errors in the parametric setting, and how the projection in \eqref{eq:THi} simplifies if $\mathcal{E}$ is a hypercube. 
\subsubsection{Relative errors}
If, instead of absolute errors, we have relative errors $\epsilon_r$ such that $\tilde{z}(\theta) = (I+\text{diag}(\epsilon_r)) z(\theta)$, we can bound the corresponding absolute error $\epsilon$ as $\|\epsilon\|_{\infty} \leq \bar{\epsilon}$ with
\begin{equation}
  \label{eq:rel2abs}
  \bar{\epsilon} = \max_i \max_{\theta\in\Theta,\: \epsilon_r \in \mathcal{E}_r} [ \text{diag}(\epsilon_r) z(\theta)]_i,
\end{equation}
where $[\cdot]_i$ denotes the $i$th component of a vector.
When $z(\theta)$ is affine, $\Theta$ is a polyhedron and $\mathcal{E}_r$ is a box, the optimization problems in \eqref{eq:rel2abs} can be recast as linear programs (LP).  

\subsubsection{Simplified error model $\mathcal{E}$}
\label{sssec:simplified-error}
If $\mathcal{E}$ has additional structure, namely, is a hypercube centered at the origin
\begin{equation}
  \label{eq:Ebox}
  \mathcal{E}=\{\epsilon: \|\epsilon\|_{\infty} \leq \bar{\epsilon} \},
\end{equation}
the projected region $\tilde{\Theta}^i$ in \eqref{eq:THi} takes the closed form 
\begin{equation}
  \label{eq:th-box}
  \tilde{\Theta}^i = \{\theta \in \Theta: A^i z(\theta) \leq b^i + \|A^i\|_1 \bar{\epsilon} \}, 
\end{equation}
where we define $\| \cdot \|_1$ of a matrix as the 1-norm evaluated row-wise. 

The case when the sides are not of equal lengths, or if the center is not the origin, is directly handled by a translation followed by a scaling, which retains the polyhedral structure. 

\section{Numerical Experiments}
\label{sec:result}
To illustrate how the proposed lift-partition-project scheme can be used to analyze the behaviour of a solver in the presence of numerical errors, we consider the dual active-set algorithm in \cite{daqp}, which is covered by the complexity certification framework in \cite{arnstrom2022unifying}. As in Example \ref{ex:dual}, we consider absolute errors in the primal slack $\mu$ (computed at Step 5 in Algorithm 1 in \cite{daqp}). For simplicity, we use the error model $\mathcal{E}=\{\epsilon:\|\epsilon\|_{\infty} \leq \bar{\epsilon}\}$, i.e., a hypercube with side lengths $\bar{\epsilon}$. Unless stated otherwise, we use the tolerance $\epsilon_p = 10^{-6}$.

\begin{remark}
The reported experiments are by no means exhaustive of the possible analyzes that the proposed lift-partition-project scheme enables. To get more intricate results, additional structure in the solver, for example, how matrix updates are performed, needs to be specified (which is, again, deliberately abstracted away in this paper).
\end{remark}

We apply the certification method in \cite{arnstrom2022unifying} extended with the lift-partition-project scheme in Algorithm~\ref{alg:lpp} on a multi-parametric quadratic program of the form \eqref{eq:mpqp} that originates from the MPC of  
an inverted pendulum on a cart, which is a tutorial problem in the Model Predictive Control Toolbox in MATLAB. Specifically, the resulting mpQP has the dimensions $n_x = 5, m=10$, and $n_{\theta} = 8$.

\pgfplotstableread{data/invpend_slack.dat}{\invpendslack}
\pgfplotstableread{data/iter_accum.dat}{\iterrate}

First, we use the proposed framework to analyze how the worst-case primal slack $\mu$, taken over all regions, changes as the number of iterations increases, for different error upper bounds $\bar{\epsilon}$. 
The result is shown in Figure \ref{fig:wc-slack}. 
(Recall that dual active-set algorithms terminate if  $\mu \geq -\epsilon_p$ or, equivalently, if $-\mu \leq \epsilon_p$.) 
\begin{figure}
  \centering
  \begin{tikzpicture}[scale=0.8]
	\begin{axis}[
	  xmin=1,xmax=14,
	  ymin=0.00001,
	  ymode=log,
	  xlabel={\# of iterations},
	  minor y tick num=1,
	  ylabel={$\underset{\theta}{\max} -\mu(\theta)$},
	  legend style={at ={(1,1)},anchor=north east}, ymajorgrids,yminorgrids,xmajorgrids,
	  x post scale=1.25,
	  y post scale=0.7,
	  legend cell align={left},legend columns=1,
	  ]
	  \addplot [set19c3,very thick] table [x={iter}, y={eps1e-3}] {\invpendslack};
	  \addplot [set19c2,very thick] table [x={iter}, y={eps1e-4}] {\invpendslack};
	  \addplot [set19c1,very thick] table [x={iter}, y={eps0}] {\invpendslack};
	  \legend{
		$\bar{\epsilon} = 1\cdot 10^{-3}$, 
		$\bar{\epsilon} = 1\cdot 10^{-4}$, 
		$\bar{\epsilon} = 0$
	  };
	\end{axis}
  \end{tikzpicture}
  \caption{Worst-case primal slack for any $\theta \in \Theta_0$ after varying number of executed iterations and error bounds $\bar{\epsilon}$.}%
  \label{fig:wc-slack}
\end{figure}
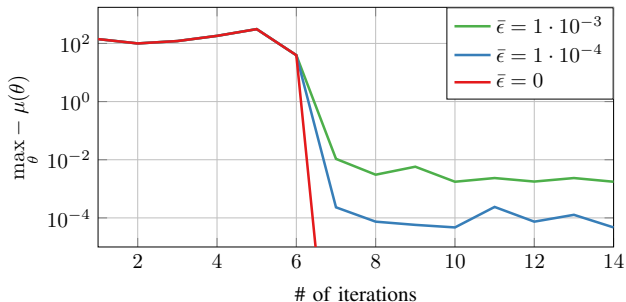

In perfect arithmetic ($\bar{\epsilon}=0$) the worst-case primal slack goes below the tolerance $\epsilon_p=10^{-6}$ after 8 iterations for all parameters of interest. The certification method, hence, concludes that the worst-case number of iterations is 8. If, on the other hand, the upper bound on the errors is $10^{-4}$ or $10^{-3}$ the worst-case slack never becomes lower than $10^{-6}$ before reaching the iteration limit (which was set to 15). Figure \ref{fig:wc-slack} also illustrates that the worst-case slack barely improves after 8 iterations. Hence, if some primal infeasibility above $10^{-6}$ is acceptable, one could use an ad hoc termination criterion of always terminating after 8 iterations, and the proposed framework provide guarantees on how much the primal infeasibility becomes in the worst-case (even after numerical errors have been accounted for.) Without the guarantees provided by the proposed method, such an ad hoc rule might lead to unexpected behaviour, since the primal slack is not necessarily monotonically increasing in dual active-set methods (which can also be seen in Figure \ref{fig:wc-slack} as $-\mu$ is not monotonically decreasing).
	
The worst-case number of iterations for different upper bounds $\bar{\epsilon}$ on the error and different tolerances $\epsilon_p$ is reported in Table \ref{tab:tol-vs-error},
which highlights the (intuitive) necessity of selecting tolerances that are \textit{error-estimating}, i.e., larger than the magnitudes of errors, to ensure that active-set methods are well-behaved (cf. Definition 5.2 and Theorem 5.3 in \cite{ogryczak1988simplex} for details). The proposed method, hence, enables us to analyze the exact interactions of numerical errors and tolerances for a given linear MPC problem. 
\begin{table}
  \centering
  \caption{Worst-case number of iterations for varying values on the error bound $\bar{\epsilon}$ and the primal feasibility tolerance $\epsilon_p$. An entry with $\infty$ means that the iteration limit was reached.}
  \label{tab:tol-vs-error}
  \begin{tabular}{c | c c c }
	\hline \hline
	$\epsilon_p \backslash \bar{\epsilon}$ & 0 & $10^{-4}$ & $10^{-3}$   \\  \hline 
	$10^{-6}$  & 8 & $\infty$ & $\infty$  \\ 
	$10^{-4}$  & 8& 9 & $\infty$  \\ 
	$10^{-3}$  & 7 & 8 & 11  \\ 
	\hline \hline
  \end{tabular}
\end{table}

To highlight that the analysis can be used for more than just analyzing worst-case behaviour of the solver, Figure \ref{fig:ratio} shows the percentage of regions that terminate before a certain number of iterations have been performed for different upper bounds $\bar{\epsilon}$ on the error. Figure \ref{fig:ratio} illustrates that not only the worst-case number of iterations increases with larger errors, but also the median number of iterations increases. Note that proposed method can identify $\textit{exactly}$ which parameters $\theta$ result in the algorithm terminating after a given number of iterations.    
\begin{figure}
  \centering
  \begin{tikzpicture}[scale=0.8]
	\begin{axis}[
	  xmin=1,xmax=14,
	  xlabel={\# of iterations},
	  ylabel={$\%$ terminated regions},
	  legend style={at ={(0.5,1.05)},anchor=south}, ymajorgrids,yminorgrids,xmajorgrids,
	  x post scale=1.25,
	  y post scale=0.6,
	  y filter/.code={\pgfmathparse{#1*100}\pgfmathresult},
	  legend cell align={left},legend columns=3,
	  ]
	  \addplot [set19c1,very thick] table [x={iter}, y={eps0}] {\iterrate};
	  \addplot [set19c2,very thick] table [x={iter}, y={eps1e-4}] {\iterrate};
	  \addplot [set19c3,very thick] table [x={iter}, y={eps1e-3}] {\iterrate};
	  \legend{
		$\bar{\epsilon} = 0$,
		$\bar{\epsilon} = 1\cdot 10^{-4}$, 
		$\bar{\epsilon} = 1\cdot 10^{-3}$
	  };
	\end{axis}
  \end{tikzpicture}
  \caption{Percentage of parameter regions corresponding to the solver terminating after a certain number of iterations.}%
  \label{fig:ratio}
\end{figure}

\section{Conclusion}
We have proposed a general framework that extends parametric complexity certification methods to account for numerical errors that might occur internally in the solvers that are certified. 
Numerical errors in an iteration are accounted for in three steps. First we extend the parameter space, which normally consists of system states and setpoints, with parameters representing the numerical errors. We then partition the extended parameter space based on an iteration in the solver to be certified, similarly to what is normally done in these certification methods in the nominal parameter space. Finally, to retain tractability, we project the resulting regions from the extended parameter space back onto the nominal parameter space. 

To illustrate possible analyses that the extension enables, experiments where the extension was incorporated in the complexity certification of a dual active-set solver were performed. These experiments highlight, for example, that the proposed lift-partition-project scheme can be used to rigorously analyze the interconnection between solver tolerances and numerical errors, which can be used to tune tolerances \textit{a priori}.

More generally, by allowing a rigorous analysis of how numerical errors affect the behaviour of active-set QP solvers, the proposed lift-partition-project scheme improves the reliability of applying such solvers in safety-critical MPC applications. 

\bibliographystyle{IEEEtran}
\linespread{1.0}\selectfont
\bibliography{lib.bib}
\end{document}